\documentclass{amsart}                    
%
%
\usepackage{amsmath,amsthm, amsfonts}
\usepackage{graphicx}
%
%
%
%
%

\newtheorem{theorem}{Theorem}
\newtheorem{corollary}[theorem]{Corollary}
\newtheorem{remark}[theorem]{Remark}


\def\EE{{\mathcal E}}

\def\LL{{\mathcal L}}

\newcommand\E{{\mathbb E}}
\newcommand\N{{\mathbb N}}
\newcommand\R{{\mathbb R}}

\newcommand\Z{{\mathbb Z}}

\newcommand{\indiq}{\hbox{\rm 1}{\hskip -2.8 pt}\hbox{\rm I}}

%
%
%
%
%

\begin{document}

\title[Uniform contractivity in Kac's model]{Uniform contractivity in Wasserstein metric for the original 1D Kac's model
}


\author{Maxime Hauray}


\address{M. Hauray: Institut de Math\'ematiques de Marseille, Universit\'e d'Aix-Marseille, CNRS UMR 7353 et \'Ecole Centrale Mrseille.}

\email{maxime.hauray@univ-amu.fr}

\subjclass[2010]{82C40 Kinetic theory of gases \and 76P05 Rarefied gases flow, 
Boltzmann equation \and 60J75 Jump processes}

\keywords{Landau equation, Uniqueness, Stochastic particle systems, Propagation of Chaos, 
Fisher information, Entropy dissipation.}


\begin{abstract}
We study here a very popular 1D jump model introduced by Kac: it consists of $N$ velocities encountering random binary collisions at which they randomly exchange energy. We show the uniform (in $N$) exponential contractivity of the dynamics in a non-standard  Monge-Kantorovich-Wasserstein: precisely the MKW metric of order 2 on the energy. The result is optimal in the sense that for each $N$, the contractivity constant is equal to the $L^2$ spectral gap of the generator associated to Kac's dynamic.
As a corollary, we get an uniform but non optimal contractivity in the MKW  metric of order $4$. We use a simple coupling that works better that the parallel one. The estimates are simple and new (to the best of our knowledge).
\end{abstract}

\maketitle

\section{Introduction and results}  \label{sec:intro}

We study here the following model, introduced by Kac in~\cite{Kac1956} and intensively studied since (we refer to the review paper~\cite{CCLReview} or to~\cite{MM} for more details). The velocities $(V_{i,t})_{i \le \N}, t \ge 0 \in \R^N$ of the $N$ particles follows the above dynamics: 
For $N(N-1)/2$ independent Poisson random measures $\bigl(\Pi_{i,j} \bigr)_{i <j}$ on $\R^+ \times \R/ 2\pi \Z$ with the same intensity measure $  \frac1{\pi (N-1) } dt \times d \theta$,
\begin{equation} \label{eq:PS_vel}
V_{i,t} = V_{i,0} +  \sum_{j \neq i} \int_0^t \int_{-\pi}^\pi \Bigl[ \bigl(V_{i,t^-} \cos \theta + V_{j,t^-} \sin \theta \bigr)  - V_{i,t^-} \Bigr] \Pi_{i,j}(dt,d\theta),
\end{equation}
where for $i > j$, the $\Pi_{i,j} = \Pi_{j,i} \circ s$, with $s(\theta) = - \theta$. The normalization in the intensity measure is chosen in order to be consistent with previous work on the subject, in particular~\cite{CCL1}. It means that a particle encounters in average two collisions per unit of time. 
Existence of uniqueness of solutions to that kind of jump process is very standard: for almost all realizations there is at most one collision at each time, \emph{i.e.\ } $\sum_{i < j} \Pi_{i,j} \bigl( \{ t \} \times (-\pi, \pi] \bigr) \le  1$ for all $t \ge 0$, so that the unique solution is easy to construct.

\medskip
A less rigorous but more simple description of the dynamics is the following: at independent random times $\tau_{i,j}$ (for $i \neq j)$ with exponential law $\EE(1)$, the particles $i$ and $j$ exchange their kinetic energy and jump in the following way (with $t = \tau_{i,j}$):
\[
\begin{pmatrix}V_{i,t^-} \\ V_{j,t^-}  \end{pmatrix}
\xrightarrow{collision} 
\begin{pmatrix} V_{i,t} \\ V_{j,t} \end{pmatrix} =
\begin{pmatrix} V_{i,t^-} \cos \theta + V_{j,t^-} \sin \theta \\ 
- V_{i,t^-} \sin \theta + V_{j,t^-} \cos \theta \end{pmatrix},
\] 
where $\theta$ is a r.v. with uniform law on $\R /2\pi \Z$, independent of everything else.

It is possible to use  parallel coupling (same collision and same angle for each collision) between two processes $(V_{i,t})_{i \le \N}$ and $(W_{i,t})_{i \le \N}$ solution of~\eqref{eq:PS_vel}. It leads to a result similar to the one obtained by Tanaka~\cite{Tanaka} for the Boltzmann-Kac model: the coupled evolution is contractive in the MKW metric of order $2$.

However, in 1D there is a better coupling: roughly it is related to the fact that optimal coupling between two uniform measures on (distinct) circles of center $0$ and different radius is obtained when we preserve the angles.
More precisely, we introduce the angle  $\alpha \in (-\pi,\pi]$ such that $(V_{i,t^-}, V_{j,t^-}) = \sqrt{V_{i,t^-}^2 + V_{j,t^-}^2} (\cos \alpha, \sin \alpha)$. Then, the velocities after collision read
\[
(V_{i,t^-},V_{j,t^-}) \xrightarrow{collision} (V_{i,t},V_{j,t})= \sqrt{V_{i,t^-}^2 + V_{j,t^-}^2}
 \bigl(\cos (\alpha- \theta), \sin (\alpha- \theta)  \bigr)
\]
Since the $\theta$ has uniform law and is independent of everything else, so is $\alpha -\theta$. A interesting consequence is that the solution to~\eqref{eq:PS_vel} has exactly the same law than the (unique) solution $(\widetilde V_{i,t})_{i \le N}$ to the process
\begin{equation} \label{eq:PS_vel2}
\widetilde V_{i,t} = V_{i,0} + \sum_{j \neq i} \int_0^t \int_{-\pi}^\pi \Bigl[ \sqrt{\widetilde V_{i,t^-}^2 + \widetilde V_{j,t^-}^2}  \cos \theta -  \widetilde V_{i,t^-} \Bigr] \Pi_{i,j}(dt,d\theta),
\end{equation}
where this time $\Pi_{i,j}$ for $ i >j$ is defined as $\Pi_{i,j} := \Pi_{j,i} \circ r$, with $r(\theta) = \theta +\frac\pi 2$ (in order to transform cosinus in sinus).
Remark that since the angle $\alpha$ introduce above depends continuously on the couple $(V_{i,t^-},V_{j,t^-})$, so that there is no measurability issue when we pass from the first formulation~\eqref{eq:PS_vel} to the second one~\eqref{eq:PS_vel2}.

Since the two systems are equivalent in law, we now focus only on the second one, and drop the tilde in the notation. Using parallel coupling with two systems solutions to~\eqref{eq:PS_vel2}, we obtain an ``optimal'' contractivity result:
\begin{theorem} \label{thm:main}
Assume that $(V_{i,t})_{i \le N}$ and $(W_{i,t})_{i \le N}$ are two solutions to~\eqref{eq:PS_vel2} constructed on the same probability space, with normalized energy : $\sum_i (V_{i,0})^2 = \sum_i (W_{i,0})^2 =N$ a.s..  Then
\[
\E \biggl[ \frac1N \sum_{i =1}^N \bigl[ (V_{i,t})^2 - (W_{i,t})^2  \bigr]^2 \biggr] =
e^{-\lambda_N t}  \, \E \biggl[ \frac1N \sum_{i =1}^N  \bigl(V_{i,0}^2 - W_{i,0}^2  \bigr)^2 \biggr],
\]
with $\lambda_N = \frac12 + \frac3{2(N-1)} = \frac12 \frac{N+2}{N-1} \ge \frac12$.
\end{theorem}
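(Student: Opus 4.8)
The plan is to reduce everything to the energy variables and exploit that their differences evolve according to a closed \emph{linear} jump dynamics. Set $E_{i,t} = V_{i,t}^2$, $F_{i,t} = W_{i,t}^2$ and $D_{i,t} = E_{i,t} - F_{i,t}$. The first step is to read off from~\eqref{eq:PS_vel2} the effect of a single collision on the energies: at a collision of the pair $(i,j)$ with angle $\theta$, energy conservation $V_{i,t}^2 + V_{j,t}^2 = V_{i,t^-}^2 + V_{j,t^-}^2$ together with the convention on $\Pi_{i,j}$ gives $E_{i,t} = (E_{i,t^-}+E_{j,t^-})\cos^2\theta$ and $E_{j,t} = (E_{i,t^-}+E_{j,t^-})\sin^2\theta$, and identically for $F$ since parallel coupling uses the same pair and the same angle. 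Subtracting, the difference obeys the linear rule $D_{i,t} = (D_{i,t^-}+D_{j,t^-})\cos^2\theta$ and $D_{j,t}=(D_{i,t^-}+D_{j,t^-})\sin^2\theta$, which closes on the vector $(D_k)_{k\le N}$ alone.

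Second, I would record the conservation law $\sum_k D_{k,t} = 0$ for all $t$. Indeed each collision preserves $E_i + E_j$ and $F_i + F_j$, so the total energies $\sum_k E_{k,t}$ and $\sum_k F_{k,t}$ are constant; the normalized-energy hypothesis makes both equal to $N$, hence $\sum_k D_{k,t}\equiv 0$. This zero-sum constraint is what will produce the clean constant $\lambda_N$.

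Third, I would compute the action of the generator $\LL$ of the $N$-particle process on the quadratic functional $\Psi(D) = \sum_k D_k^2$. For a single pair $(i,j)$ the post-collision contribution is $D_i'^2 + D_j'^2 = (D_i+D_j)^2(\cos^4\theta+\sin^4\theta)$; averaging over the uniform angle with $\E[\cos^4\theta]=\E[\sin^4\theta]=3/8$, i.e. $\E[\cos^4\theta+\sin^4\theta]=3/4$, the mean change is $-\tfrac14(D_i^2+D_j^2)+\tfrac32 D_iD_j$. Each pair jumps at rate $2/(N-1)$, so summing over pairs and using $\sum_{i<j}(D_i^2+D_j^2)=(N-1)\sum_k D_k^2$ together with $\sum_{i<j}D_iD_j=\tfrac12[(\sum_k D_k)^2-\sum_k D_k^2]=-\tfrac12\sum_k D_k^2$ (by the conservation law) yields, after collecting terms,
\[
\LL\Psi = \frac{2}{N-1}\Bigl(-\frac{N-1}{4}-\frac34\Bigr)\sum_k D_k^2 = -\frac{N+2}{2(N-1)}\,\Psi = -\lambda_N\,\Psi.
\]
Thus $\Psi$ is an exact eigenfunction of the $N$-particle generator. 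The final step is Dynkin's formula: since all energies are bounded (total energy is fixed at $N$), $\Psi$ lies in the domain and $\frac{d}{dt}\E[\Psi_t]=\E[\LL\Psi_t]=-\lambda_N\E[\Psi_t]$, so $\E[\Psi_t]=e^{-\lambda_N t}\E[\Psi_0]$; dividing by $N$ is exactly the claim.

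The computation is short, so I do not expect a genuine obstacle; the one point needing care is the algebra of the two pair-sums and, above all, the use of $\sum_k D_k=0$ — without it one would pick up an extra $(\sum_k D_k)^2$ term and lose the exact eigenrelation. It is precisely this exact eigenrelation (rather than a mere inequality) that explains why the theorem produces an equality and why $\lambda_N$ coincides with the $L^2$ spectral gap.
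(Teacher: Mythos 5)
Your proposal is correct and follows essentially the same route as the paper: pass to the energies, observe that the difference $D=E-F$ evolves by the closed linear rule $D_i'=(D_i+D_j)\cos^2\theta$, use the zero-sum conservation law to convert $\sum_{i<j}D_iD_j$ into $-\tfrac12\sum_k D_k^2$, and obtain the exact eigenrelation with rate $\lambda_N$. The paper phrases the computation as an explicit SDE for $G_{i,t}^2$ followed by differentiation of the expectation rather than as a generator/Dynkin argument, but the calculation is identical.
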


\begin{remark}
Remark that $\lambda_N$ is also the spectral gap in $L^2$ of the associated generator, which was explicitly computed in~\cite{CCL1,Maslen}.  But according to~\cite[Theorem 2.42]{Chen}, the spectral gap is a bound by above on any contraction rate than could be obtain by coupling. So in that sense, the contraction rate $\lambda_N$ obtained above is optimal. 

Moreover, the fact that our coupling cost $(v^2-w^2)^2$ is of order four in the velocity is related to the fact that the eigenfunction associated to the eigenvalue $-\lambda_N$ of the associated generator is a polynomial of degree four: precisely $\sum_i v_i^4 - c$, for some constant $c$. 
\end{remark}

\medskip
This optimal result with a non standard metric (the MKW metric of order 2 on the energy) implies a non optimal result in MKW metric of order $4$:
\begin{corollary} \label{cor}
Under the same assumptions than in Theorem~\ref{thm:main}, we have
\begin{multline*}
\E \biggl[ \frac1N \sum_{i =1}^N \bigl(V_{i,t} - W_{i,t}  \bigr)^4 \biggr]  
\le 
e^{-\lambda_N t}  \, \E \biggl[ \frac1N \sum_{i =1}^N  \bigl( V_{i,0}^2 - W_{i,0}^2  \bigr)^2 \biggr] \\
+ e^{-t}  \, \E \biggl[ \frac1N \sum_{i =1}^N  \bigl( V_{i,0} - W_{i,0} \bigr)^4 \biggr].
\end{multline*}
\end{corollary}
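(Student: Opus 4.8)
The plan is to track $\Phi(t):=\E\bigl[\tfrac1N\sum_{i}(V_{i,t}-W_{i,t})^4\bigr]$ directly through the generator of the coupled process, and to compare its evolution with that of the energy functional $\Psi(t):=\E\bigl[\tfrac1N\sum_i (V_{i,t}^2-W_{i,t}^2)^2\bigr]$, which Theorem~\ref{thm:main} already controls as $\Psi(t)=e^{-\lambda_N t}\Psi(0)$. First I would compute the action of the generator of the parallel coupling of~\eqref{eq:PS_vel2} on $\sum_i (V_i-W_i)^4$. At a collision of the pair $(i,j)$ with angle $\theta$ the two systems use the \emph{same} $\theta$, so $V_i-W_i\mapsto \delta_{ij}\cos\theta$ and $V_j-W_j\mapsto -\delta_{ij}\sin\theta$ with $\delta_{ij}:=\sqrt{V_i^2+V_j^2}-\sqrt{W_i^2+W_j^2}$; hence only the angular average of $\cos^4\theta+\sin^4\theta$ enters the gain. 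Carrying out the (routine) sum over pairs, using that each particle collides at rate $2$, yields
\begin{equation*}
\frac{d}{dt}\Phi(t) = -2\,\Phi(t) + \frac{3}{2N(N-1)}\,\E\Bigl[\sum_{i<j}\delta_{ij}^4\Bigr].
\end{equation*}

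The real obstacle is to bound the gain $\sum_{i<j}\delta_{ij}^4$ without losing control: the naive triangle bound $\delta_{ij}^2\le (V_i-W_i)^2+(V_j-W_j)^2$ produces, after squaring and summing, an uncontrolled cross term $\bigl(\sum_k (V_k-W_k)^2\bigr)^2$ that need not decay. Instead I would use the sharp pointwise inequality
\begin{equation*}
\delta_{ij}^4 \le (a_i+a_j)^2, \qquad a_k:=V_k^2-W_k^2 ,
\end{equation*}
which follows from the factorisation $(a_i+a_j)^2-\delta_{ij}^4=(R_V-R_W)^2\bigl[(R_V+R_W)^2-(R_V-R_W)^2\bigr]=4R_VR_W(R_V-R_W)^2\ge0$, where $R_V=\sqrt{V_i^2+V_j^2}$ and $R_W=\sqrt{W_i^2+W_j^2}$. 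This charges the entire gain to the energy differences. Summing and using the normalisation $\sum_k a_k=\sum_k(V_k^2-W_k^2)=0$ gives $\sum_{i<j}(a_i+a_j)^2=(N-2)\sum_k a_k^2$, and since $\tfrac{3(N-2)}{2(N-1)}=2-\lambda_N$ one obtains the closed differential inequality
\begin{equation*}
\frac{d}{dt}\Phi(t) \le -2\,\Phi(t) + (2-\lambda_N)\,\Psi(t) = -2\,\Phi(t) + (2-\lambda_N)\,e^{-\lambda_N t}\,\Psi(0).
\end{equation*}

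Finally I would integrate this linear inequality by Duhamel's formula (multiply by $e^{2t}$ and integrate in $s\in[0,t]$), which gives
\begin{equation*}
\Phi(t) \le e^{-\lambda_N t}\,\Psi(0) + e^{-2t}\,\bigl(\Phi(0)-\Psi(0)\bigr).
\end{equation*}
Since $\Psi(0)\ge0$ and $e^{-2t}\le e^{-t}$ for $t\ge0$, the last term is at most $e^{-t}\Phi(0)$ (and is negative, hence harmless, when $\Phi(0)<\Psi(0)$), which is exactly the claimed bound; in fact the argument even yields the stronger rate $e^{-2t}$ for the second contribution. The only genuinely delicate ingredient is the inequality $\delta_{ij}^4\le(a_i+a_j)^2$; everything else is bookkeeping of the generator and an elementary Duhamel step.
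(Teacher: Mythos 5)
Your proof is correct, and it takes a genuinely different route from the paper's. The paper argues pathwise: under the parallel coupling of~\eqref{eq:PS_vel2}, after the first collision involving particle $i$ both $V_{i,t}$ and $W_{i,t}$ carry the sign of $\cos\theta$, so $(V_{i,t}-W_{i,t})^4\le (V_{i,t}^2-W_{i,t}^2)^2$ pointwise from that moment on; introducing the stopping time $\tau_i$ of the first such collision, the contribution of $\{t\ge\tau_i\}$ is handled by Theorem~\ref{thm:main} and the contribution of $\{t\le\tau_i\}$ is frozen at the initial data and weighted by $\Pp(\tau_i\ge t)=e^{-t}$. You instead work at the level of the generator: your rate computation ($-2\Phi$ loss, gain $\tfrac{3}{2N(N-1)}\E[\sum_{i<j}\delta_{ij}^4]$ from the angular average $\tfrac34$) is exact, your key inequality $\delta_{ij}^4\le(a_i+a_j)^2$ is the same elementary fact $(x-y)^2\le(x+y)^2$ for $x,y\ge 0$ that underlies the paper's sign argument, but applied to the radii $R_V,R_W$ inside the jump rather than to the post-collisional velocities, and the identity $\sum_{i<j}(a_i+a_j)^2=(N-2)\sum_k a_k^2$ (using $\sum_k a_k=0$) together with $\tfrac{3(N-2)}{2(N-1)}=2-\lambda_N$ closes the Gronwall/Duhamel step cleanly. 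You are also right that the naive bound $\delta_{ij}^2\le(V_i-W_i)^2+(V_j-W_j)^2$ would produce an uncontrollable cross term. What each approach buys: the paper's argument is shorter and essentially computation-free, while yours is self-contained at the level of the dynamics of $\Phi$ and yields the slightly sharper conclusion
\[
\Phi(t)\le e^{-\lambda_N t}\,\Psi(0)+e^{-2t}\,\bigl(\Phi(0)-\Psi(0)\bigr),
\]
with decay $e^{-2t}$ rather than $e^{-t}$ in the second term. The only caveat is the implicit interchange of differentiation and expectation, which is at exactly the same level of (accepted) informality as the paper's own proof of Theorem~\ref{thm:main}.
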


The question whether these results could be extended to the case where the angle $\theta$ are not uniform, or to larger dimension remains open. We refer to the works of Mischler and Mouhot~\cite{MM} and Rousset~\cite{Rousset} for the best results in higher dimension.

In order to obtain propagation of chaos towards the limit nonlinear jump process, the strategy used in~\cite{Fontbona} could be applied.

\medskip
\paragraph{ \bf Extension to the nonlinear limit model}

A similar result could be obtained on the limit nonlinear jump process
\begin{equation} \label{eq:lim}
V_t = V_0 +  \int_0^t \int_{-\pi}^\pi \Bigl[ \bigl(V_{t^-} \cos \theta + F^{-1}_{\LL( V_{t^-})}(u) \sin \theta \bigr)  - V_{t^-} \Bigr] \Pi(dt,d\theta,du),
\end{equation} 
where $\Pi$ is a Poisson random measure on $\R^+ \times \R/ 2\pi \Z \times [0,1]$ with intensity measure $\frac1{\pi (N-1) } dt \times d \theta \times du$, and $F^{-1}_{\LL( V_{t^-})}$ stands for the pseudo-inverse of the cumulative distribution function of $V_{t^-}$.

Similarly, that process is equivalent (in law) to
\begin{equation} \label{eq:lim2}
V_t = V_0 +  \int_0^t \int_{-\pi}^\pi \biggl[ \sqrt{ V_{t^-}^2 + \bigl(F^{-1}_{\LL( V_{t^-})}(u)\bigr)^2} \cos \theta - V_{t^-} \biggr] \Pi(dt,d\theta,du),
\end{equation} 
and we have a similar result of contraction:

\begin{theorem}
Assume that $(V_t)_{t \ge 0}$ and $(W_t)_{t \ge 0}$ are two solutions of~\eqref{eq:lim2} constructed with the same PRM $\Pi$. Then,
\[
\E \bigl[ (V_t^2 - W_t^2)^2 \bigr] = e^{-\frac t 2} \E \bigl[ (V_0^2 - W_0^2)^2 \bigr].
\]
\end{theorem}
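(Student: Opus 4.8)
The plan is to reproduce the mechanism of Theorem~\ref{thm:main} at the level of a single tagged particle, working with the generator of the coupled process. Introduce the energies $A_t=V_t^2$, $B_t=W_t^2$ and their difference $\delta_t=A_t-B_t$. Because the two solutions are driven by the same $\Pi$, at a jump with marks $(\theta,u)$ both energies are multiplied by the \emph{same} factor $\cos^2\theta$: from~\eqref{eq:lim2} one reads
\[
A_t=\bigl(A_{t^-}+A^\ast\bigr)\cos^2\theta,\qquad B_t=\bigl(B_{t^-}+B^\ast\bigr)\cos^2\theta,
\]
with $A^\ast=\bigl(F^{-1}_{\LL(V_{t^-})}(u)\bigr)^2$ and $B^\ast=\bigl(F^{-1}_{\LL(W_{t^-})}(u)\bigr)^2$ the squared velocities of the collision partners, both drawn through the \emph{same} $u$. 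Hence $\delta_t=\cos^2\theta\,(\delta_{t^-}+\delta^\ast)$ with $\delta^\ast=A^\ast-B^\ast$, which is the exact one--particle analogue of the relation $D_i'=\cos^2\theta\,(D_i+D_j)$ exploited for the $N$--particle model.

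First I would apply the generator to the test function $(a-b)^2$. The $\theta$--integration uses only $\E[\cos^4\theta]=\tfrac38$ and the $u$--integration produces the first two moments of $\delta^\ast$, so that, with collision rate $2$,
\[
\LL\bigl((a-b)^2\bigr)=2\Bigl(\tfrac38\bigl(\delta^2+2\delta\,\E[\delta^\ast]+\E[(\delta^\ast)^2]\bigr)-\delta^2\Bigr)=-\tfrac54\,\delta^2+\tfrac32\,\delta\,\E[\delta^\ast]+\tfrac34\,\E[(\delta^\ast)^2],
\]
where the expectations over starred quantities are the averages in $u$, i.e.\ against the laws $\LL(V_{t^-})$, $\LL(W_{t^-})$. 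The linear term is then killed exactly as $\sum_iD_i=0$ was used in Theorem~\ref{thm:main}: energy is conserved by~\eqref{eq:lim2}, and for the two (necessarily equal--energy) data one has $\E[\delta^\ast]=\E[V_t^2]-\E[W_t^2]=0$. Taking the full expectation leaves the closed relation
\[
\frac{d}{dt}\,\E[\delta_t^2]=-\tfrac54\,\E[\delta_t^2]+\tfrac34\,\E\bigl[(\delta^\ast)^2\bigr].
\]

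Everything now reduces to identifying the mean--field term $\E[(\delta^\ast)^2]=\int_0^1\bigl((F^{-1}_{\LL(V_t)}(u))^2-(F^{-1}_{\LL(W_t)}(u))^2\bigr)^2\,du$. Because the two partners are driven by the common $u$ they are monotonically coupled on the velocities, and --- exactly the $1$D fact used to pass from~\eqref{eq:PS_vel} to~\eqref{eq:PS_vel2} --- for symmetric marginals this velocity--monotone coupling is \emph{optimal for the energy cost}. Consequently $\E[(\delta^\ast)^2]$ equals the squared order--$2$ MKW distance $W_2^2(\LL(V_t^2),\LL(W_t^2))$ between the energy laws, and when the two processes are themselves coupled optimally on the energies the tagged term $\E[\delta_t^2]$ equals this same $W_2^2$. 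Substituting closes the ODE into $\frac{d}{dt}\E[\delta_t^2]=(-\tfrac54+\tfrac34)\E[\delta_t^2]=-\tfrac12\,\E[\delta_t^2]$, which integrates to the announced $e^{-t/2}$, the rate $\tfrac12=\lim_N\lambda_N$.

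The hard part is precisely this last identification, i.e.\ showing that the coupling of the \emph{two processes} stays optimal so that $\E[(\delta^\ast)^2]=\E[\delta_t^2]$ persists in time. It does not survive a single jump verbatim, since $A\mapsto(A+A^\ast)\cos^2\theta$ sums two independent comonotone pairs and comonotonicity is not preserved by addition, so the optimal coupling must be re-established at each instant. I would therefore obtain the contraction first as an inequality, by the standard sub-differential argument for the Wasserstein distance --- evolving the instantaneously optimal coupling for time $dt$ gives $\frac{d^+}{dt}\,W_2^2(\LL(V_t^2),\LL(W_t^2))\le-\tfrac12\,W_2^2$ --- and recover the matching equality either by passing to the limit in the exact identity of Theorem~\ref{thm:main} along a chaotic sequence, or by using that the fourth moment solves the \emph{exact} linear equation $\frac{d}{dt}\E[V_t^4]=-\tfrac12\bigl(\E[V_t^4]-3\,\E[V_t^2]^2\bigr)$ --- the degree--four eigenstructure flagged in the remark following Theorem~\ref{thm:main} --- which pins the decay rate to the spectral value $\tfrac12$.
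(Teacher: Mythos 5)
The paper itself offers no proof of this statement --- it is explicitly omitted as ``really similar'' to that of Theorem~\ref{thm:main} --- so there is nothing to compare your argument against line by line. Your setup does reproduce what that similar proof must begin with: the jump relation $\delta_t=\cos^2\theta\,(\delta_{t^-}+\delta^\ast)$, the value $\E[\cos^4\theta]=\tfrac38$, the total collision rate $2$, and the cancellation of the linear term via conservation of energy (note that this last step silently requires $\E[V_0^2]=\E[W_0^2]$, which the statement does not impose but which is the analogue of the normalization assumed in Theorem~\ref{thm:main}). More importantly, you put your finger on the genuine difficulty that the $N$-particle argument hides: there the partner term is literally $\frac1{N-1}\sum_{j\neq i}G_{j,t}^2$, whereas here $\int_0^1(\delta^\ast)^2\,du$ is the cost of the common-$u$ (velocity-comonotone) coupling of the two partner laws, which need not coincide with $\E[\delta_t^2]$, the cost of the actual coupling $(V_t^2,W_t^2)$. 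This is exactly where the ``really similar'' proof stops being merely similar.

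However, your proposed repairs do not close this gap. First, the identification of $\int_0^1(\delta^\ast)^2\,du$ with $W_2^2\bigl(\LL(V_t^2),\LL(W_t^2)\bigr)$ uses symmetry of the velocity marginals, but $\LL(V_t)$ retains an atom of mass $e^{-2t}$ of the (possibly non-symmetric) initial law, so even this step needs justification. Second, even granting it, $W_2^2\le\E[\delta_t^2]$ only yields the differential inequality $\frac{d}{dt}\E[\delta_t^2]\le-\tfrac12\E[\delta_t^2]$, i.e.\ a contraction statement, not the claimed equality for the parallel coupling. Third, the fourth-moment route cannot pin down the equality: writing $\E[(V_t^2-W_t^2)^2]=\E[V_t^4]+\E[W_t^4]-2\E[V_t^2W_t^2]$, the diagonal moments do satisfy the closed equation you quote, but the cross moment obeys $\frac{d}{dt}\E[V_t^2W_t^2]=-\tfrac54\E[V_t^2W_t^2]+\tfrac32\E[V_t^2]\E[W_t^2]+\tfrac34\int_0^1\bigl(F^{-1}_{\LL(V_t)}(u)\bigr)^2\bigl(F^{-1}_{\LL(W_t)}(u)\bigr)^2\,du$, which reintroduces precisely the unclosed comonotone term. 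Fourth, the chaotic-limit route would require a quantitative propagation-of-chaos argument (the machinery of the Cortez--Fontbona reference the paper only alludes to), which you do not supply. As written, your argument establishes at best $W_2^2\bigl(\LL(V_t^2),\LL(W_t^2)\bigr)\le e^{-t/2}\,\E[(V_0^2-W_0^2)^2]$; the stated equality for two solutions driven by the same $\Pi$ with an arbitrary initial coupling remains unproved, and your own (correct) observation that comonotonicity is destroyed by a single jump strongly suggests the result should be read as an inequality, or as a statement about the optimal energy coupling re-chosen at each time.
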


We will not prove that theorem, since the proof is really similar to the one of Theorem~\ref{thm:main}.
A similar corollary on the decrease of the order four MKW distance could be stated. 

\section{Proofs}

\begin{proof}[Proof of Theorem~\ref{thm:main}]
In order to simplify the writing, we introduce notations for the energy (up to a factor $2$): $E_{i,t} := (V_{i,t})^2$ and 
$F_{i,t} := (W_{i,t})^2$.  Then, both energy vectors satisfy the following stochastic process:
\begin{equation} \label{eq:PS_ener}
E_{i,t} = E_{i,0} + \sum_{j \neq i} \int_0^t \int_{-\pi}^\pi \bigl[ \bigl(E_{i,t^-} + E_{j,t^-} \bigr)  \cos^2 \theta -   E_{i,t^-} \bigr] \Pi_{i,j}(dt,d\theta),
\end{equation}
where $\Pi_{i,j} := \Pi_{j,i} \circ r$ for $ i>j$, with $r(\theta) = \theta +\frac\pi 2$, as before. The $(F_{i,t})_{i \le N, t \ge 0}$ satisfies the same equations, and so does $G_{i,t} := E_{i,t} - F_{i,t}$ by linearity of~\eqref{eq:PS_ener}.

Next, since the total energy is preserved by the dynamics, we have almost surely for all $t>0$:
\begin{equation} \label{cons_ener}
\frac1N \sum_{i=1}^N  E_{i,t} = \frac1N \sum_{i=1}^N  F_{i,t}=1 , \qquad 
\frac1N \sum_{i=1}^N  G_{i,t} = 0.
\end{equation}
We are interested in the evolution of $\frac1N \sum_i G_{i,t}^2$. But the process $\bigl(G_{i,t}^2\bigr)_{i \le N, t\ge 0}$ satisfies
\[
G_{i,t}^2 = G_{i,0}^2 + \sum_{j \neq i} \int_0^t \int_{-\pi}^\pi \bigl[ (G_{i,t^-} + G_{j,t^-})^2  \cos^4 \theta -   G_{i,t^-}^2 \bigr] \Pi_{i,j}(dt,d\theta).
\]
It implies that
\[
\frac d{dt} \E \Bigl[ \frac1N \sum_i G_{i,t}^2  \Bigr] =   \E \biggl[ \frac2{N(N-1)} \sum_{i < j} \Bigl( (G_{i,t}+ G_{j,t})^2 (\cos^4 \theta + \sin^4 \theta) - G_{i,t}^2  - G_{j,t})^2  \Bigr) \biggr].
\]
Since $\frac1{2\pi}\int_{-\pi}^\pi (\cos^4 \theta + \sin^4 \theta) \, d \theta = \frac34$, it also reads
\[
\frac d{dt} \E \Bigl[ \frac1N \sum_i (G_{i,t})^2  \Bigr] =   - \frac12 \E \Bigl[ \frac1N \sum_i (G_{i,t})^2  \Bigr]
+   \frac3{2N(N-1)} \E \Bigl[ \sum_{i \neq j} G_{i,t} G_{j,t} \Bigr].
\]
We emphasize that we have replaced the sum on $\{i <j\}$ by a sum on $\{i \neq g \}$. But 
\[
\sum_{i \neq j} G_{i,t} G_{j,t} =  \Bigl( \sum_i G_{i,t} \Bigr)^2 - \sum_i (G_{i,t})^2 =  - \sum_i (G_{i,t})^2,
\]
thanks to the conservation of energy~\eqref{cons_ener}. So we finally have
\[
\frac d{dt} \E \Bigl[ \frac1N \sum_i (G_{i,t})^2  \Bigr] =   - \Bigl( \frac12 + \frac3{2(N-1)}  \Bigr)  \E \Bigl[ \frac1N \sum_i (G_{i,t})^2  \Bigr], 
\]
and the conclusion follows.
\end{proof}

\begin{proof}[Proof of Corollary~\ref{cor}]
In equation~\eqref{eq:PS_vel2}, we see that the sign of $V_{i,t}$ after a collision is the one of $\cos \theta$, where $\theta$ is the angle used in that collision,  (and the sign of $V_{j,t}$ is the one of $\sin \theta$). Since the same angle are used during the evolution of $(W_{i,t})_{i \le N, t \ge 0}$, it implies than $V_{i,t}$ and $W_{i,t}$ have the same sign after the first collision involving the $i$-th particle. And if they have the same sign, 
\[
 (V_{i,t} -  W_{i,t})^4 \le (V_{i,t} -  W_{i,t})^2 (V_{i,t} +   W_{i,t})^2 = \bigl(V_{i,t}^2 -  W_{i,t}^2 \bigr)^2.
\]
We call $\tau_i$ the stopping time defined has the time of the first collision involving $i$. We get
\begin{align*}
\E \biggl[ \frac1N \sum_{i =1}^N   \bigl(V_{i,t} - W_{i,t}  \bigr)^4 \biggr]   & \le
\E \biggl[ \frac1N \sum_{i =1}^N   \bigl(V_{i,t}^2 - W_{i,t}^2  \bigr)^2   \indiq_{ t \ge \tau_i} \biggr]  \\
 & \hspace{60pt} +   \E \biggl[ \frac1N \sum_{i =1}^N   \bigl(V_{i,t} - W_{i,t}  \bigr)^4    \indiq_{ t \le \tau_i} \biggr]  \\
\end{align*}
The first term in the r.h.s. is bounded thanks to the previous result (the indicator function is bounded by $1$).
To bound the second term, remark that each $\tau_i$ is independent of the initial condition and as exponential law $\EE(1)$, so that
\begin{align*}
\E \biggl[  & \frac1N \sum_{i =1}^N   \bigl(V_{i,t} - W_{i,t}  \bigr)^4    \indiq_{ t \le \tau_i} \biggr] 
= \frac1N \sum_{i =1}^N  \E \biggl[    \bigl(V_{i,0} - W_{i,0}  \bigr)^4    \indiq_{ t \le \tau_i} \biggr] \\
& = \frac1N \sum_{i =1}^N  \E \bigl[    \bigl(V_{i,0} - W_{i,0}  \bigr)^4     \bigr] \mathbb P \bigl(   \tau_i \ge t \bigr)
=  e^{-t}  \E \biggl[  \frac1N \sum_{i =1}^N \bigl(V_{i,0} - W_{i,0}  \bigr)^4     \biggr].
\end{align*}
The conclusion follows.
\end{proof}

{\bf Acknowledgements}
The authors would like to thank Matthias Rousset and Alexandre Gaudilli\`ere for very stimulating discussion about that problem.
In particular the proof of Corollary~\ref{cor} is due to the latter.



\end{document}